\newtheorem{theorem}{\bf Theorem}[section]
\newtheorem{lemma}[theorem]{\bf Lemma}
\newtheorem{conjecture}[theorem]{\bf Conjecture}
\newtheorem{corollary}[theorem]{\bf Corollary}
\newcommand{\lemref}[1]{\hyperref[#1]{Lemma~\ref*{#1}}}
\let\theta=\vartheta
\let\sigma=\varsigma
\begin{document}

\title[New bounds on the number of $n$-queens configurations]{New bounds on the number of $n$-queens configurations}

\author[Z.~Luria]{Zur Luria}
 \thanks{ Research supported by Dr.~Max R\"ossler, the Walter Haefner Foundation and the ETH Foundation.} 
\address{Institute of Theoretical Studies, ETH, 8092 Zurich, Switzerland.}
\email{zluria@gmail.com}

\date{\today}

\begin{abstract}
In how many ways can $n$ queens be placed on an $n \times n$ chessboard so that no two queens attack each other? This is the famous $n$-queens problem. Let $Q(n)$ denote the number of such configurations, and let $T(n)$ be the number of configurations on a toroidal chessboard. We show that for every $n$ of the form $4^k+1$, $T(n)$ and $Q(n)$ are both at least $n^{\Omega(n)}$. This result confirms a conjecture of Rivin, Vardi and Zimmerman for these values of $n$ \cite{RVZ94}. We also present new upper bounds on $T(n)$ and $Q(n)$ using the entropy method, and conjecture that in the case of $T(n)$ the bound is asymptotically tight. Along the way, we prove an upper bound on the number of perfect matchings in regular hypergraphs, which may be of independent interest.
\end{abstract}

\maketitle

\section{Introduction}
\label{sec:intro}

The $n$-queens problem has a long history. It was proposed by Max Bezzel in 1848, and subsequently studied by Gauss, P\'{o}lya, and many other prominent mathematicians.
Polya also considered the related problem of placing $n$ nonattacking queens on a torus, in which opposite sides of the board are identified. 
In this paper we are concerned with asymptotic bounds on the number of solutions for these two problems.

Although it is usually classified as mathematical recreation, in fact the $n$-queens problem has a number of applications in computer science and mathematics. In particular, it is of interest as an example of a constraint satisfaction problem, and it is conceivable that a better understanding of the problem could lead to general results on constraint satisfaction problems in general. Indeed, some the results in this paper have this flavor.

The $n$-queens problem on the torus has an elegant algebraic formulation. If we index the rows and columns of an $n \times n$ matrix by elements of $\mathbb{Z}_n$, each diagonal in the toroidal board is a set of the form $\{(x,y):x+y=c\}$ or $\{(x,y):x-y=c\}$, for some $c \in \mathbb{Z}_n$. Thus, a solution is a set $\{(x_i,y_i)\}_{i=1}^n$ such that $$\{x_i\}_i=\{y_i\}_i=\{x_i+y_i\}_i=\{x_i-y_i\}_i=[n].$$ 

It was established by Pauls \cite{P1874_1,P1874_2} that there is a solution to the classical $n$-queens problem for every $n \geq 4$. P\'{o}lya \cite{Pol18} proved that there is a solution in the toroidal case if and only if $n$ is not divisible by $2$ or $3$. As far as we are aware, the only upper bound on $Q(n)$ and $T(n)$ is the trivial bound $n!$. Rivin, Vardi and Zimmerman proved in \cite{RVZ94} that $T(n) \geq 2^{\frac{n}{5}}$ and that $Q(n) \geq 4^{\frac{n}{5}}$. They conjectured that in fact 
$$
\lim_{\substack{n \rightarrow \infty \\ (n,6)=1}}{\frac{\log\left(T(n)\right)}{n \log n}} > 0 \text{ , } \lim_{n \rightarrow \infty }{\frac{\log\left(Q(n)\right)}{n \log n}} > 0.
$$
For a thorough survey on the $n$-queens problem and its applications, see \cite{BS09}.

We prove the following results.
\begin{theorem} \label{thm:lower_bound}
Let $k \in \mathbb{N}$ be a positive integer, and let $n=2^{2k}+1$. Then $T(n) \geq n^{\Omega(n)}$.
\end{theorem}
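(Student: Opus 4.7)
The plan is to exploit the algebraic structure of $\mathbb{Z}_n$ when $n = 4^k + 1$ to construct $n^{\Omega(n)}$ toroidal queens configurations. The crucial observation is that $-1$ is a square modulo $n$: since $4^k \equiv -1 \pmod{n}$, setting $i := 2^k$ yields $i^2 = -1$. Moreover $n$ is odd, so $2$ is a unit, and hence $1 \pm i$ are also units (as $(1+i)(1-i) = 2$). The linear permutation $\pi_0(x) = ix$ is therefore a valid toroidal queens configuration, since $ix$, $(1+i)x$, and $(1-i)x$ are all permutations of $\mathbb{Z}_n$.

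To produce many configurations, I consider the subgroup $H = \{1, i, -1, -i\} \leq \mathbb{Z}_n^{*}$, which acts freely on $\mathbb{Z}_n \setminus \{0\}$, giving $m := (n-1)/4$ orbits $O_1, \ldots, O_m$ of size $4$. I restrict attention to \emph{orbit-respecting} configurations: those permutations $\pi$ with $\pi(0)=0$ that map each $O_j$ bijectively onto some orbit $O_{\phi(j)}$, $\phi \in S_m$. The queens conditions then force the $4$-element sets $\{\pi(w)\pm w : w \in O_j\}$ to partition $\mathbb{Z}_n \setminus \{0\}$. Using the relation $(1-i)/(1+i) = -i \in H$, i.e.\ $(1+i)H = (1-i)H$, I further restrict to configurations where each such sum-set (and difference-set) is itself an orbit, say $O_{\psi(j)}$.

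Let $N \in \mathbb{Z}_{\geq 0}^{m \times m \times m}$ with $N_{j,k,\ell}$ equal to the number of bijections $\tau \colon O_j \to O_k$ satisfying $\tau(w) + w \in O_\ell$ for every $w \in O_j$. The number of orbit-respecting queens configurations is then at least
\[
\sum_{\phi, \psi \in S_m}\, \prod_{j=1}^{m} N_{j,\phi(j),\psi(j)},
\]
a three-dimensional permanent of $N$. To conclude, I would apply a Van der Waerden-Egorychev-Falikman-style lower bound for higher-dimensional permanents (in the spirit of Linial-Luria's work on Latin square enumeration) to show this sum is at least $m^{\Omega(m)} = n^{\Omega(n)}$, provided $N$ is sufficiently regular.

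The main obstacle is verifying the required regularity of $N$. For fixed representatives $\alpha_j, \alpha_k, \alpha_\ell$ of the orbits, the condition $N_{j,k,\ell} > 0$ reduces to the solvability of the system $\alpha_k \beta(h) + \alpha_j h \in \alpha_\ell H$ for $h \in H$, where $\beta$ is a bijection of $H$---a Hall-type matching condition on a bipartite graph with parts of size $4$. The key algebraic ingredient is that the sumsets $H + cH$ interact flexibly with the $H$-cosets in $\mathbb{Z}_n$ for a large fraction of $c \in \mathbb{Z}_n^{*}$, a fact that should follow from the Gaussian-integer-like arithmetic afforded by $i \in \mathbb{Z}_n$ together with the $\mathbb{Z}_n^{*}/H$-symmetry of the orbit structure. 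Combining this with a careful accounting of matching counts across triples $(j,k,\ell)$ should establish the slice balance needed to apply the permanent lower bound and finish the proof.
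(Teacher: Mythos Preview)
Your approach is genuinely different from the paper's, and as written it has real gaps.

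The paper's argument is elementary and self-contained. Starting from the same linear configuration $B=\{(2^{k}y,y):y\in\mathbb{Z}_n\}$, it proves that for any two queens of $B$ there is a unique second pair of queens such that replacing all four by the ``flipped'' four (swap the two columns in each pair) again gives a legal toroidal solution. Every empty square determines exactly one such flip, so there are $n(n-1)/4$ flips, each meeting at most $4(n-1)$ others; a greedy selection of $n/16$ pairwise disjoint flips already yields $\Theta(n)^{n/16}$ distinct solutions. No permanents, no regularity hypotheses, no appeal to external theorems.

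In your plan there are two concrete problems. First, the tensor $N_{j,k,\ell}$ only requires $\tau(w)+w\in O_\ell$; it neither forces $w\mapsto\tau(w)+w$ to be a bijection onto $O_\ell$, nor says anything about the difference map $w\mapsto\tau(w)-w$. The identity $(1-i)/(1+i)\in H$ shows that \emph{for the linear map} $\pi_0$ the sum- and difference-orbits coincide, but it does not transfer to a general $\tau$. Indeed, for $\tau(ha)=hb$ ($h\in H$) the sum-set is $(a+b)H$ and the difference-set is $(b-a)H$, and one checks that $(b-a)/(a+b)\in H$ forces $b\in aH$; so whenever $O_j\neq O_k$ these two orbits differ. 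Hence $\sum_{\phi,\psi}\prod_j N_{j,\phi(j),\psi(j)}$ is not a lower bound for $T(n)$: many terms correspond to $\pi$ that violate the minus-diagonal condition or collide on the plus-diagonal within an orbit. At minimum you need a four-index tensor recording both a sum-orbit and a difference-orbit.

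Second, even after repairing the tensor, the whole weight of the argument rests on a Van~der~Waerden--type lower bound for a higher-dimensional permanent under a ``slice regularity'' hypothesis that you neither state precisely nor verify. No off-the-shelf theorem of that kind is available; the known $3$-dimensional results (for Latin squares, say) depend on very specific structure. Your final paragraph is a sequence of ``should''s: the actual combinatorial work---showing that enough triples $(j,k,\ell)$ admit the required matchings with the right multiplicities---is the entire difficulty, and it is absent. By contrast, the paper extracts all the flexibility it needs from one explicit $8$-equation identity and a two-line greedy count.
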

Note that since every legal toroidal configuration is also a solution for the classical case, this theorem implies the same lower bound on $Q(n)$.
The following theorem provides a nontrivial upper bound on $T(n)$.
\begin{theorem} \label{thm:upper_bound_torus}
For any integer $n$, $T(n) \leq \left((1+o(1)) \cdot \frac{n}{e^3}\right)^{n}$.
\end{theorem}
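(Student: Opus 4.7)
My plan is to encode toroidal configurations as perfect matchings of a regular hypergraph and to bound their number by an entropy argument. Let $\mathcal{Q}_n$ be the $4$-partite, $4$-uniform, $n$-regular hypergraph whose $4n$ vertices are the rows, columns, sum-diagonals $\{x+y=c\}$, and difference-diagonals $\{x-y=c\}$ of the torus, and whose $n^2$ edges are the cells $(x,y)\in\mathbb{Z}_n^2$ (each identified with the $4$-element set $\{x,y,x+y,x-y\}$ of lines through it). Then $T(n)$ is exactly the number of perfect matchings of $\mathcal{Q}_n$. As $T(n)=0$ whenever $n$ is even (by P\'olya), I may assume $n$ is odd, in which case $\mathcal{Q}_n$ is \emph{linear}: any two distinct edges share at most one vertex, as one checks using invertibility of $2$ modulo an odd $n$.

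To bound the number of perfect matchings I would use the entropy method. Let $\mu$ be a uniformly random perfect matching of $\mathcal{Q}_n$ and, independently, let $\pi$ be a uniform random ordering of its $n$ edges; write $E_1,\dots,E_n$ for the edges of $\mu$ in this order. By the chain rule,
\[
\log T(n)+\log n!\;=\;H(\mu,\pi)\;=\;\sum_{i=1}^n H(E_i\mid E_1,\dots,E_{i-1}).
\]
Let $N_i$ denote the number of edges of $\mathcal{Q}_n$ disjoint from $E_1,\dots,E_{i-1}$. The conditional support of $E_i$ lies inside these $N_i$ edges, so combining $H(X\mid Y)\leq \Exp_Y\log|\operatorname{supp}(X\mid Y)|$ with Jensen's inequality gives $H(E_i\mid E_{<i})\leq \log \Exp[N_i]$.

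The crux is estimating $\Exp[N_i]$. Conditional on $\mu$, the prefix $\{E_1,\dots,E_{i-1}\}$ is a uniformly random $(i-1)$-subset of the $n$ edges of $\mu$. Each $e\in\mu$ is available with probability $(n-i+1)/n$, contributing $n-i+1$ in total. For $e\notin\mu$, linearity forces the four edges $\mu(v_1),\dots,\mu(v_4)$ of $\mu$ covering the four vertices of $e$ to be pairwise distinct (else two edges of $\mathcal{Q}_n$ would share $\geq 2$ vertices); $e$ is available iff none of these four is in the prefix, an event of probability $\binom{n-i+1}{4}/\binom{n}{4}\leq((n-i+1)/n)^4(1+O(1/n))$. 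Summing over the $\leq n^2$ such edges yields
\[
\Exp[N_i]\;\leq\;(n-i+1)+\frac{(n-i+1)^4}{n^2}\bigl(1+o(1)\bigr).
\]

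Writing $m=n-i+1$ and splitting the summation at $m=n^{2/3}$ (above the threshold the quartic term dominates and contributes $4\log m-2\log n$; below there are only $n^{2/3}$ terms, each of size $O(\log n)$) gives $\sum_{i=1}^n\log \Exp[N_i]\leq 4\log n!-2n\log n+o(n)$. Combining with the entropy identity and Stirling's formula,
\[
\log T(n)\;\leq\;3\log n!-2n\log n+o(n)\;=\;n\log n-3n+o(n),
\]
which exponentiates to the claim. The main obstacle I expect is abstracting the argument into a clean general lemma about perfect matchings in regular hypergraphs (as the abstract advertises) and ensuring that both the $(1+o(1))$ slack on $\Exp[N_i]$ and the linearity assumption really deliver the constant $3$ in the exponent after the log-and-sum; this is where the split at $m=n^{2/3}$ is essential, since otherwise the additive term $n-i+1$ would appear to spoil the quartic estimate for small $m$.
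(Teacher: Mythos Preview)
Your argument is correct and complete for the torus: the hypergraph $\mathcal{Q}_n$ is indeed linear when $n$ is odd, your expectation estimate for $N_i$ is right, and the split at $m=n^{2/3}$ cleanly absorbs the additive $(n-i+1)$ term into an $o(n)$ error, so the final Stirling computation delivers the constant $3$ exactly as claimed.

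Your route, however, differs from the paper's in a meaningful way. The paper proves a general hypergraph lemma: for any $d$-uniform $k$-regular hypergraph on $N$ vertices with all codegrees $o(k)$, the number of perfect matchings is at most $((1+o(1))k/e^{d-1})^{N/d}$, and then reads off the queens bound with $d=4$, $k=n$, $N=4n$. Its entropy argument is \emph{vertex-indexed}: one reveals, for each vertex $v$, the matching edge $X_v$ through $v$, in a random order generated by i.i.d.\ $U[0,1]$ labels $\alpha_v$; the integral $\int_0^1 \alpha^{d-1}\log(k\alpha^{d(d-1)})\,d\alpha$ then produces the $-(d-1)$ directly, with an explicit ``bad edge'' accounting to handle the codegree-$o(k)$ (rather than linear) hypothesis. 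Your argument is instead \emph{edge-indexed}: you couple a uniform matching with a uniform ordering of its $n$ edges and apply the chain rule to the ordered sequence $(E_1,\dots,E_n)$. This is slightly more elementary for $\mathcal{Q}_n$ and avoids the continuous $\alpha$-integration, but it leans squarely on linearity to guarantee that the four $\mu$-edges blocking a given $e\notin\mu$ are pairwise distinct. To extend your approach to the paper's general lemma (codegree merely $o(k)$), you would need an analogue of the paper's bad-edge bookkeeping, since for non-linear $H$ the blockers of $e$ can coincide and the clean $\binom{n-i+1}{d}/\binom{n}{d}$ formula no longer holds; the vertex-indexed proof absorbs this automatically.
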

In fact, this result is obtained as a corollary of Theorem \ref{thm:hypergraphs}, which is a general theorem on hypergraphs, and may be of independent interest.

Theorem \ref{thm:hypergraphs} is obtained via an entropy proof, which has given asymptotically tight results in similar situations \cite{Rad97, LL13, Ke15, GL16}.
These results are the justification for the following conjecture, which is a strengthening of Conjecture 1 from \cite{RVZ94}.
\begin{conjecture}
For any integer $n$ that is not divisible by $2$ or $3$ there holds $T(n)=\left((1+o(1)) \cdot \frac{n}{e^3}\right)^{n}$.
\end{conjecture}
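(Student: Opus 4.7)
Since \thmref{thm:upper_bound_torus} already provides the upper bound $T(n)\leq((1+o(1))\cdot n/e^3)^n$, what remains is to prove a matching lower bound $T(n)\geq((1-o(1))\cdot n/e^3)^n$ for $\gcd(n,6)=1$. My plan is to model toroidal queens configurations as perfect matchings in a highly structured hypergraph and then run a semi-random (R\"odl nibble) matching process, completed by an absorber, in the spirit of the proofs that have delivered asymptotically tight matching counts for Latin squares, Steiner systems, and the other combinatorial designs cited as motivation for the conjecture.

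The hypergraph model is the natural one: let $H_n$ be the $4$-partite $4$-uniform hypergraph on vertex classes $R,C,D^+,D^-$, each a copy of $\mathbb{Z}_n$, with one hyperedge $\{r_i,c_j,d^+_{i+j},d^-_{i-j}\}$ for each cell $(i,j)\in\mathbb{Z}_n^2$. Perfect matchings of $H_n$ are precisely toroidal $n$-queens solutions, so $T(n)=\mathrm{pm}(H_n)$. The hypergraph is $n$-regular with $n^2$ edges, and for odd $n$ each pair of vertices from distinct parts lies in exactly one edge, so $H_n$ is a ``linear'' design of index one. The conjectured count is consistent with a first-moment heuristic: a uniformly random permutation $\pi\in S_n$ has probability roughly $e^{-n}$ of making $\{i+\pi(i)\}$ a permutation of $\mathbb{Z}_n$ and, independently, roughly $e^{-n}$ of making $\{i-\pi(i)\}$ a permutation, giving expected solution count $n!\cdot e^{-2n}=((1+o(1))n/e^3)^n$, which suggests that $H_n$ should be sufficiently ``generic'' for the heuristic to hold on the nose.

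The nibble would iteratively retain each surviving edge independently with some small probability $\varepsilon$, keep the resulting partial matching, and delete the covered vertices and incident edges. Provided $H_n$ remains quasi-random throughout --- vertex degrees tightly concentrated around the running mean, and pair-codegrees of order $n^{1-\Omega(1)}$ --- the arguments of R\"odl, Kahn, Pippenger--Spencer, and in the design-counting context Keevash, yield after $(1-o(1))n$ rounds a near-perfect matching missing only $o(n)$ vertices per part. Tracking the entropy of the random choices produces at least $((1-o(1))n/e^3)^n$ distinct near-matchings. An absorber, a structure planted in $H_n$ in advance so that every admissible leftover can be completed within it, then extends each near-matching to a full perfect matching with only subexponential multiplicity loss, and summing over outcomes delivers the lower bound.

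The main obstacle is not the abstract nibble/absorbing scheme but verifying that $H_n$ satisfies the required quasi-randomness hypotheses after arbitrary partial matchings are removed, and designing an absorber compatible with the algebraic structure. The additive structure on $\mathbb{Z}_n$ creates persistent correlations --- for instance, the involution $(i,j)\mapsto(-i,-j)$ forces systematic symmetries, and short additive configurations (rectangles with matched diagonal sums, short arithmetic-progression traps) may deviate from the random-design expectation in ways that accumulate during the nibble. Controlling these correlations, and building an absorber flexible enough to swallow arbitrary nibble leftovers, is where I expect the real work to lie; a black-box application of the existing nibble machinery is likely insufficient, and some direct exploitation of the $\mathbb{Z}_n$ structure seems necessary.
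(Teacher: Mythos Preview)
The statement you are addressing is a \emph{conjecture} in the paper, not a theorem: the paper offers no proof of it at all. The only evidence the paper provides is the upper bound of Theorem~\ref{thm:upper_bound_torus} together with the analogy to other entropy-tight counting problems (Latin squares, Steiner systems, etc.). There is therefore nothing in the paper to compare your argument against.

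As for your proposal itself, it is not a proof but a research outline, and you are candid about this: you correctly identify that the nibble-plus-absorber paradigm is the natural line of attack, and you correctly locate the genuine obstacles, namely (i) maintaining quasirandomness of the residual hypergraph through the iterative process despite the additive correlations forced by the $\mathbb{Z}_n$ structure, and (ii) constructing an absorber that can complete an arbitrary $o(n)$-sized leftover. Neither of these is handled by a black-box citation, and you say so. So what you have written is a reasonable plan of attack accompanied by an honest assessment of where the real work lies, but it does not constitute a proof, and the paper does not claim to have one either.
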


Note that Theorem \ref{thm:upper_bound_torus} does not imply an upper bound on the number of solutions in the classical case. 
\begin{theorem} \label{thm:upper_bound_classical}
There exists a constant $\alpha>1.587$ such that for any integer $n$, $Q(n) \leq \left((1+o(1)) \cdot \frac{n}{e^{\alpha}}\right)^{n}$.
\end{theorem}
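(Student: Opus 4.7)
The plan is to adapt the entropy-based argument behind Theorem~\ref{thm:upper_bound_torus} to the classical board. The essential new difficulty is that, on an $n\times n$ non-toroidal board, the two families of diagonals have lengths ranging from $1$ to $n$, so the associated $4$-uniform hypergraph (with parts indexed by rows, columns, positive diagonals, and negative diagonals) is regular only on the row and column parts. Theorem~\ref{thm:hypergraphs} therefore does not apply directly, and one has to carry out a semi-regular variant of its entropy argument by hand.

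I would write $\log Q(n)=H(\sigma)$ for a uniformly random valid configuration $\sigma\in S_n$, sample a uniformly random row ordering $\pi\in S_n$, and apply Radhakrishnan's chain rule
\[
H(\sigma)=\Exp_{\pi}\sum_{i=1}^{n}H\bigl(\sigma(\pi(i))\bigm|\sigma|_{\pi([i-1])}\bigr).
\]
In the toroidal setting the $i$-th conditional entropy is at most $\log(n-3(i-1))$ deterministically, and a Br\'egman-style refinement of this cardinality bound --- the same refinement that underlies Theorem~\ref{thm:hypergraphs} --- upgrades the final sum to $n\log(n/e^{3})$. In the classical setting a previous queen at $(i',\sigma(i'))$ still forbids its column but blocks only zero, one, or two of the diagonal positions $\sigma(i')\pm(\pi(i)-i')$, depending on whether these lie in $[1,n]$. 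Averaging over $\pi$ and over $\sigma$ provides an explicit lower bound on the expected number of forbidden columns, and the Br\'egman refinement applied to this semi-regular setting yields an exponent $\alpha<3$ whose explicit value one then checks numerically exceeds $1.587$.

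The main obstacle is the semi-regular Br\'egman refinement itself. The clean hypergraph bound of Theorem~\ref{thm:hypergraphs} uses regularity in all four parts to turn the entropy inequality into a product of factorials that sums to $(k-1)n$ in the exponent; for the classical queens, a careful analogue is needed for parts with variable vertex degrees. Concretely this amounts to stratifying the prior queens by which of their two diagonals hit the board in row $\pi(i)$ and accumulating the factorial-type contributions with weights reflecting diagonal lengths. A secondary subtlety is that the marginal distribution of $\sigma(i')$ under the uniform measure on valid configurations is not obviously uniform on $[n]$; this can be handled using the $D_{4}$ symmetry of the board together with an elementary spread estimate that does not circularly invoke any unproved property of random $n$-queens configurations.
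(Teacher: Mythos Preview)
Your framework---entropy of a uniformly random configuration, random ordering of the rows, chain rule---matches the paper's. But two concrete pieces are missing, and without them the argument does not reach the constant $1.587$.

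First, your description of the toroidal mechanism is off. The bound $\log(n-3(i-1))$ is not a valid deterministic cardinality bound (it goes negative once $i>n/3$, and prior queens' three attacks in a row can coincide), nor is it how Theorem~\ref{thm:hypergraphs} works. The paper uses a \emph{continuous}-time ordering $\alpha_i\sim U[0,1]$ and Jensen on the inner randomness, so that the relevant quantity is $\Exp_{\alpha\mid\alpha_i}[N_i]$ as a polynomial in $\alpha_i$. In the classical case this gives $\Exp[N_i\mid\alpha_i]=1+a_i\alpha_i^3+b_i\alpha_i^2+c_i\alpha_i$, where $a_i,b_i,c_i$ count positions in row $i$ ruled out by three, two, or one other rows. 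Your ``factorial-type contributions weighted by diagonal lengths'' is not the right object here.

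Second, and more importantly, you are missing the deterministic geometric lemma that produces the constant. The paper shows that for \emph{every} valid configuration
\[
\sum_i(2a_i+b_i)=\sum_{(i,j)\in X}D(i,j)\ \ge\ \tfrac{5}{4}n^2-O(n),
\]
where $D(i,j)$ is the number of squares sharing a diagonal with $(i,j)$. The proof is a one-line observation: $D$ is constant on concentric square rings, and each ring holds at most four queens. This bound is what feeds into the concavity step and gives $\int_0^1\log\bigl(\tfrac58 x^2+\tfrac38\bigr)\,dx$, hence $\alpha=3-2\sqrt{3/5}\,\arctan\sqrt{5/3}>1.587$. Your proposed route---averaging over $\sigma$, invoking $D_4$ symmetry, and appealing to a ``spread estimate'' for the marginal of $\sigma(i')$---is unnecessary and, as stated, does not yield any specific constant; the paper never needs distributional information about queen positions because the ring bound is deterministic.
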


In section \ref{sec:lower_bound} we prove Theorem \ref{thm:lower_bound}. Theorem \ref{thm:upper_bound_torus} is proven in section \ref{sec:upper_bound_torus}, and Theorem \ref{thm:upper_bound_classical} is proven in section \ref{sec:upper_bound_classical}.

\section{A lower bound}
\label{sec:lower_bound}

The proof idea is similar to a recent lower bound by Potapov \cite{Pot16} on the number of Latin hypercubes. In essence, the idea is to find a ``flexible" starting configuration $B$, which admits many different possibilities for local modification. We show that one can obtain at least $n^{\Omega(n)}$ different configurations.

As detailed in the introduction, we index the rows and columns by elements of $\mathbb{Z}_n$, and set $B = \{(x,y):x=2^k \cdot y\}$. 

\begin{lemma} 
$B$ is a legal toroidal $n$-queens configuration.
\end{lemma}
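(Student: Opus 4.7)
The plan is to verify directly from the algebraic characterization in the introduction: a toroidal configuration is legal iff the sets of first coordinates, second coordinates, sums, and differences each equal $\mathbb{Z}_n$. Writing $B = \{(2^k y, y) : y \in \mathbb{Z}_n\}$, the four sets become the images of the maps $y \mapsto 2^k y$, $y \mapsto y$, $y \mapsto (2^k+1)y$, and $y \mapsto (2^k-1)y$ on $\mathbb{Z}_n$. Each is a bijection onto $\mathbb{Z}_n$ if and only if the corresponding multiplier is a unit modulo $n = 2^{2k}+1$, so the problem reduces to showing that $2^k$, $2^k+1$, and $2^k-1$ are all coprime to $n$.

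Coprimality with $2^k$ is immediate from $2^{2k} \equiv -1 \pmod{n}$, which gives $(2^k)(2^k) \equiv -1$, so $2^k$ is a unit. For the sum and difference the key identity is
\[
(2^k - 1)(2^k + 1) = 2^{2k} - 1 \equiv -2 \pmod{n}.
\]
Any common prime divisor of $n$ and $2^k \pm 1$ would therefore divide $2$; but $n = 2^{2k}+1$ is odd, so no such prime exists and both $2^k+1$ and $2^k-1$ are units modulo $n$.

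With all four multipliers invertible, the four required sets each equal $\mathbb{Z}_n$, confirming that $B$ is a legal toroidal configuration. I do not expect any real obstacle here; the only thing to double-check is the small-$k$ edge case (the statement presumes $k$ a positive integer, so $n \geq 5$, and $n$ is automatically coprime to $6$ since $n = 2^{2k}+1$ is odd and $2^{2k} \equiv 1 \pmod 3$ gives $n \equiv 2 \pmod 3$, consistent with P\'olya's solvability criterion).
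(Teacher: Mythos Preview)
Your proof is correct and follows essentially the same route as the paper: reduce legality to invertibility of $2^k$, $2^k+1$, and $2^k-1$ in $\mathbb{Z}_n$, using $2^{2k}\equiv -1$ and $(2^k-1)(2^k+1)\equiv -2$. The only cosmetic difference is that the paper exhibits explicit inverses (raising $(2^k-1)(2^k+1)$ to the $4k$-th power), whereas you argue directly via coprimality with the odd number $n$; both are equally valid.
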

\begin{proof}
First, we note that the elements $(2^k - 1),2^k$, and  $(2^k + 1)$ are all invertible in $\mathbb{Z}_n$. Indeed, $2^{2k} = -1$ modulo $n$, and therefore $2^k \cdot 2^{3k} = 1$ modulo $n$. 
We also have $(2^k - 1)(2^k+1) = -2$ modulo $n$, and so $\left((2^k - 1)(2^k+1)\right)^{2k} = -1$ modulo $n$. Hence, $\left((2^k - 1)(2^k+1)\right)^{4k} = 1$ modulo $n$, implying that $(2^k - 1)$ and $(2^k + 1)$ are invertible.

Clearly, for every $y \in \mathbb{Z}_n$ there is a unique $x \in \mathbb{Z}_n$ such that $(x,y) \in B$. The fact that $2^k$ is invertible implies that for every $x \in \mathbb{Z}_n$ there is a unique $y \in \mathbb{Z}_n$ such that $(x,y) \in B$. It remains to show that each diagonal contains a unique element of $B$.

In other words, we must show that for every $z \in \mathbb{Z}_n$ there is a unique $(x,y) \in B$ such that $x+y=z$, and a unique $x',y' \in B$ such that $x'-y'=z$. The first statement is equivalent to the fact that the equation $2^k y + y =z$ has a unique solution, which follows from the invertibility of $(2^k+1)$. Likewise, the second statement follows from the invertibility of $(2^k-1)$.
\end{proof}

Given two elements $(x_1,y_1),(x_2,y_2)$ in a permutation matrix, it is always possible to obtain a new permutation matrix by ``flipping" them: That is, replacing them with the elements $(x_1,y_2),(x_2,y_1)$. We would like to define a similar flipping operation for $n$-queens configurations, but the problem is that the diagonal constraints would be violated. However, it turns out that $B$ has the wonderful property that for any pair of elements in $B$, there is an additional pair, such that we can flip both pairs together without violating any diagonal constraints. This property will enable us to make local changes to $B$.

\begin{lemma} 
\label{lem:pair}
For every pair of elements $(x_1,y_1),(x_2,y_2) \in B$, there is a unique pair $(x_3,y_3),(x_4,y_4) \in B$ such that
\begin{itemize}
\item $x_1 + y_1=x_3+y_4$. 
\item $x_2 + y_2=x_4+y_3$. 
\item $x_3 + y_3=x_2+y_1$. 
\item $x_4 + y_4=x_1+y_2$. 
\item $x_3 - y_3=x_1-y_2$. 
\item $x_2 - y_2=x_3-y_4$. 
\item $x_1 - y_1=x_4-y_3$. 
\item $x_4 - y_4=x_2-y_1$. 
\end{itemize}

\end{lemma}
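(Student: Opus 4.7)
The plan is to parametrize each element of $B$ by its $y$-coordinate---every element of $B$ has the form $(ay,y)$ with $a := 2^k$---and thereby reduce the eight conditions to a small linear system in $y_3, y_4$. After substituting $x_i = a y_i$ for $i=1,\dots,4$, each of the eight equations becomes a linear relation in $y_1,\dots,y_4$ with coefficients drawn from $\{a-1,\,a,\,a+1\}$. The algebraic engine of the argument is the identity $a^2 = -1$ in $\mathbb{Z}_n$, together with the invertibility of $2$, $a-1$, and $a+1$: invertibility of $a\pm 1$ was established in the previous lemma, and $2$ is invertible because $n = 2^{2k}+1$ is odd for $k \ge 1$.

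Next I would pick just two of the eight conditions, for instance the first two,
\[
(a+1)y_1 = a y_3 + y_4, \qquad (a+1)y_2 = y_3 + a y_4,
\]
and regard them as a $2 \times 2$ linear system in $(y_3, y_4)$. Its determinant is $a^2 - 1 = -2$, which is invertible, so the system has a unique solution. A direct computation, using $a^2 = -1$, yields
\[
y_3 = \tfrac{1}{2}(a+1)(y_2 - a y_1), \qquad y_4 = \tfrac{1}{2}(a+1)(y_1 - a y_2).
\]
Setting $x_3 = a y_3$ and $x_4 = a y_4$ then produces a unique candidate pair $(x_3,y_3),(x_4,y_4) \in B$. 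This disposes of uniqueness outright and reduces the lemma to verifying that these values also satisfy the remaining six equations.

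The remaining verification is a mechanical check driven entirely by the identities $(a+1)^2 = 2a$ and $(a-1)^2 = -2a$, both immediate from $a^2 = -1$. For example, to check the identity $(a+1)y_3 = a y_2 + y_1$ one computes
\[
(a+1) y_3 \;=\; \tfrac{(a+1)^2}{2}(y_2 - a y_1) \;=\; a(y_2 - a y_1) \;=\; a y_2 + y_1,
\]
and to check $(a-1) y_4 = a y_2 - y_1$ one computes
\[
(a-1) y_4 \;=\; \tfrac{(a-1)(a+1)}{2}(y_1 - a y_2) \;=\; -(y_1 - a y_2) \;=\; a y_2 - y_1;
\]
the other four identities are analogous two-line calculations. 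I do not anticipate any real obstacle: once two of the conditions have been used to pin down $y_3$ and $y_4$, the remaining six are forced by the same algebra, and the only arithmetic fact needed throughout is $a^2 = -1$.
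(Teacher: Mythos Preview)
Your proof is correct and follows essentially the same approach as the paper: parametrize $B$ via $x_i = 2^k y_i$, solve a linear subsystem for $(y_3,y_4)$, and verify the remaining equations using $a^2=-1$. The only cosmetic difference is that the paper solves equations~3 and~4 directly (invoking invertibility of $a+1$ rather than of $2$) and shortcuts the verification by observing that the last four equations are exactly $2^k$ times the first four, since $2^k(2^k+1)\equiv 2^k-1 \pmod n$.
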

\begin{proof}
Note that for a fixed pair $(x_1,y_1),(x_2,y_2) \in B$, we have eight equations and only two degrees of freedom. We rewrite the equations, recalling that $x_i = 2^k y_i$. We have
\begin{itemize}
\item $(2^k+1) y_1=2^k y_3+y_4$. 
\item $(2^k+1) y_2=2^k y_4+y_3$. 
\item $(2^k+1) y_3=2^k y_2+y_1$. 
\item $(2^k+1) y_4=2^k y_1+y_2$.
\item $(2^k-1) y_3=2^k y_1-y_2$. 
\item $(2^k-1) y_2=2^k y_3-y_4$. 
\item $(2^k-1) y_1=2^k y_4-y_3$.  
\item $(2^k-1) y_4=2^k y_2-y_1$. 
\end{itemize}
Noting that $2^k \cdot (2^k + 1) = 2^k - 1$ modulo $n$, it follows that the last four equations are $2^k$ times the first four. The third and fourth equations imply that if there is a solution, it is $y_3 = (2^k+1)^{-1} \cdot (2^k y_2 + y_1)$ and $y_4 = (2^k+1)^{-1} \cdot (2^k y_1 + y_2)$. Plugging these values into the first two equations, it is easy to verify that they are satisfied as well. 
\end{proof}

Now, given any pair $(x_1,y_1),(x_2,y_2) \in B$ and $(x_3,y_3),(x_3,y_3) \in B$ satisfying Lemma \ref{lem:pair}, we can locally modify $B$ by setting
\[
 B' = B \setminus \{(x_1,y_1),(x_2,y_2),(x_3,y_3),(x_4,y_4)\} \cup \{(x_1,y_2),(x_2,y_1),(x_3,y_4),(x_4,y_3)\}.
\]
The equations in Lemma \ref{lem:pair} ensure that $B'$ is a legal $n$-queens configuration. For example, the diagonal whose elements sum to $x_1+y_1$ will lose the element $(x_1,y_1)$, but gain the element $(x_3,y_4)$. We call such a modification a flip.

As a corollary, note that for every square $(x,y)$ on the board not containing a queen, there is a \textit{unique} flip that places a queen on that square. This flip is easy to find, since it must involve the queens on the same row and column as $(x,y)$.

Now, since each unoccupied square defines a unique flip, and each flip places a queen on four unoccupied squares, the total number of possible flips in $B$ is $\frac{n(n-1)}{4}$.

We say that two flips in $B$ are disjoint if they do not involve any of the same queens. One can perform any number of flips simultaneously and independently, provided that they are all disjoint. Moreover, this process is reversible: Since each queen that is not in its original location must be the result of a unique flip, we can reconstruct which flips were made from the resulting configuration. Therefore, the number of toroidal $n$-queens configurations is at least the number of disjoint sets of flips.

Now, we greedily selecting one flip at at time, taking care that all of the flips are disjoint. As each flip intersects at most $4(n-1)$ other flips, and the total number of flips in $B$ is $\frac{n(n-1)}{4}$, there are at least $\frac{n(n-1)}{4}-(i-1)\cdot 4(n-1)$ ways to select the $i$-th flip. We continue this process for $t:=n/16$ steps, obtaining a collection of $t$ disjoint flips. The number of possible collections that we can construct in this manner is lower bounded by $ \frac{1}{t} \cdot \prod_{i=1}^t{\frac{n(n-1)}{4}-(i-1)\cdot 4(n-1)} $. By applying Stirling's formula and simplifying, this is $\Theta(n)^{n/16}$, completing the proof.

\section{An entropy upper bound for the toroidal case}
\label{sec:upper_bound_torus}

In recent years, the entropy method has been applied to many different problems, and it has proven to be a versatile and powerful tool. We summarize the properties of information entropy that we need below. 

\begin{itemize}
\item The base $e$ entropy of a discrete random variable $X$ is defined by \\
$H(X) := -\sum_{x \in \text{Range($X$)}}{\Pr(X=x) \log(\Pr(X=x))}$. 
\item For any discrete random variable $X$ there holds $H(X) \leq \log(|\text{Range($X$)}|)$, with equality if and only if $X$ is uniformly distributed.
\item The conditional entropy of $X$ given $Y$ is $H(X|Y)= \mathbb{E}_Y [H(X|Y=y)]$. In other words, if one thinks of $X$ conditioned on the event $Y=y$ as a random variable, then $H(X|Y=y)$ is a well defined function of the value of $Y$. The conditional entropy $H(X|Y)$ is the expectation of this function of $Y$.  
\item The chain rule: $$ H(X_1,...,X_n) = \sum_{i=1}^n{H(X_i|X_1, ... , X_{i-1})}.$$
\item If $Y$ is a function of $X$ then $H(X) \geq H(Y)$. In particular, if two sets of random variables encode the same information, then they have the same entropy.
\end{itemize}
For a thorough introduction to information entropy, see \cite{CT12}.

Rather than addressing the $n$-queens problem directly, we feel that it might be beneficial to prove a much more general theorem, from which Theorem \ref{thm:upper_bound_torus} will follow as a corollary.

A $d$-uniform hypergraph is $k$-regular if every vertex belongs to exactly $k$ edges. The codegree of two vertices is the number of edges that contain both. A perfect matching in a hypergraph is a collection of disjoint edges that cover all of the vertices. The following theorem gives an asymptotic upper bound on the number $M(H)$ of perfect matchings in a regular hypergraph $H$ whose codegrees are small. 

\begin{theorem} \label{thm:hypergraphs}
Let $d$ be a constant, and let $k \rightarrow \infty$ as $n \rightarrow \infty$. Let $H=\langle V,E \rangle$ be a $d$-uniform, $k$-regular hypergraph on $n$ vertices such that all of the codegrees are $o(k)$. Then 
\[
M(H) \leq \left((1+o(1))\frac{k}{e^{d-1}}\right)^{n/d}.
\]
\end{theorem}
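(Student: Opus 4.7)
My plan is to follow the entropy method, in the style of Radhakrishnan and Linial--Luria. Let $M$ be a uniformly random perfect matching of $H$, so $H(M)=\log M(H)$; encode $M$ by the family $(X_v)_{v\in V}$, where $X_v$ is the edge of $M$ containing $v$. Since $(X_v)_v$ and $M$ determine each other, $H(M)=H((X_v)_v)$. Let $\pi\colon V\to [n]$ be a uniformly random permutation independent of $M$, and expand $H(M)=H(M\mid\pi)$ by the chain rule in $\pi$-order. The structural observation driving the argument is that $X_v$ is already determined by the previously revealed $X_u$'s whenever $v$ is not the $\pi$-minimum vertex of its $M$-edge, so the conditional entropy is $0$ for such $v$. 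Only vertices $v$ that are $\pi$-first in $X_v$ contribute, and for such $v$ we have $H(X_v\mid\text{prior})\le \log|A_v|$, where $A_v$ is the set of edges $e\ni v$ whose remaining $d-1$ vertices have not yet been covered by any previously revealed $M$-edge. Writing $J_v$ for the indicator that $v$ is $\pi$-first in $X_v$, this gives
\[
H(M)\;\le\;\mathbb{E}_{\pi,M}\sum_{v\in V}J_v\log|A_v|.
\]

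To evaluate this expectation I would condition on $\pi(v)=tn$ and apply Jensen's inequality. A direct count yields $\Pr[J_v=1\mid\pi(v)=tn]=(1-t)^{d-1}(1+o(1))$, since $J_v=1$ requires the other $d-1$ vertices of $X_v$ to be placed after $v$ in $\pi$. For $\mathbb{E}[|A_v|\mid J_v=1,\pi(v)=tn]$, each of the $k$ edges $e\ni v$ is admissible iff every partner $u\in e\setminus\{v\}$ lies in an $M$-edge whose $\pi$-minimum is at least $\pi(v)$; a typical such partner satisfies this with probability $(1-t)^d$, and treating the $d-1$ partners as independent yields $(1-t)^{d(d-1)}$, leading to the target estimate $\mathbb{E}[|A_v|\mid J_v=1,\pi(v)=tn]\le (1+o(1))k(1-t)^{d(d-1)}$. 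Concavity of $\log$ then gives $\mathbb{E}[J_v\log|A_v|\mid\pi(v)=tn]\le (1+o(1))(1-t)^{d-1}[\log k+d(d-1)\log(1-t)]$; summing over $v$ (equivalently, integrating over $t$) produces
\[
H(M)\;\le\;(1+o(1))\,n\int_0^1(1-t)^{d-1}\bigl[\log k+d(d-1)\log(1-t)\bigr]\,dt.
\]
Using $\int_0^1(1-t)^{d-1}\,dt=1/d$ and $\int_0^1 s^{d-1}\log s\,ds=-1/d^2$ simplifies this to $H(M)\le \tfrac{n}{d}\log(k/e^{d-1})+o(n)$, and exponentiating gives the theorem.

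The main technical obstacle is justifying the heuristic bound $\mathbb{E}[|A_v|\mid J_v=1,\pi(v)=tn]\le(1+o(1))k(1-t)^{d(d-1)}$ rigorously. The ``unmatched'' events for the different partner vertices of an edge $e\ni v$ are not literally independent under the uniform distribution on $M$, because two partners could share an $M$-edge; this is precisely where the codegree bound $o(k)$ is used. Since any two partner slots can be jointly contained in only $o(k)$ edges through $v$, the second-moment correction to the independence estimate contributes only $o(k)$ to $\mathbb{E}|A_v|$, which is absorbed into the $(1+o(1))$ factor. Some additional care will be needed near $t=1$, where $|A_v|$ can be as small as $1$ (it is always at least $1$ because $X_v\in A_v$) and the heuristic is less tight; but the contribution of $t$ near $1$ to the integral is negligible, and a standard truncation argument in the spirit of \cite{LL13} should suffice.
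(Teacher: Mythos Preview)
Your proposal is correct and follows essentially the same approach as the paper: both encode a uniformly random perfect matching via $(X_v)_{v\in V}$, expand entropy by the chain rule in a random order (the paper uses i.i.d.\ $\alpha_v\sim U[0,1]$ rather than a permutation, which is cosmetic), observe that only vertices that are first in their own matching edge contribute, apply Jensen to pass to $\log \mathbb{E}[N_v]$, and evaluate the resulting integral $\int_0^1 x^{d-1}\log(kx^{d(d-1)})\,dx = \tfrac{1}{d}(\log k - (d-1))$.

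The one place your sketch is slightly imprecise is the codegree step. The obstacle is not independence ``under the uniform distribution on $M$''; rather, for a \emph{fixed} matching $M$, an edge $e\ni v$ may have two partners lying in the same $M$-edge, in which case the $\pi$-events are dependent and $\Pr[e\in A_v\mid \pi(v)]$ exceeds $(1-t)^{d(d-1)}$. The paper handles this by calling such $e$ ``bad'', bounding the total number of bad edges by $(n/d)\binom{d}{2}\cdot o(k)$ via the codegree hypothesis, and then applying a second Jensen step (concavity of $\log$ over the sum in $v$) to replace each $|E^B_v|$ by the average $\tfrac{1}{n}\sum_v |E^B_v|=o(k)$. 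Your per-vertex claim $\mathbb{E}[|A_v|]\le (1+o(1))k(1-t)^{d(d-1)}$ is not directly available without this averaging over $v$, but once you insert that step your argument coincides with the paper's. Your worry about $t$ near $1$ is handled in the paper simply by keeping the additive $+1$ (from $X_v\in A_v$) inside the logarithm throughout and absorbing it at the end; no separate truncation is needed.
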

Since a great many combinatorial problems are equivalent to finding a perfect matching in some regular hypergraph, this theorem has innumerable applications. The examples below are really just the tip of the iceberg.

\begin{corollary}
\label{cor:examples}
\begin{enumerate}
\item The number of $n$-queens solutions on the torus is at most 
\[
\left((1+o(1))\frac{n}{e^3}\right)^{n}.
\]
 \item The number of transversals in an order-$n$ Latin square is at most
 \[
\left((1+o(1))\frac{n}{e^2}\right)^{n}.
\]
 \item The number of order-$n$ Sudoku squares is at most
 \[
\left((1+o(1))\frac{n}{e^3}\right)^{n^2}.
\]
 \item The number of $(n,q,r)$-Steiner systems is at most 
\[
 \left((1+o(1))\frac{\binom{n}{q-r}}{e^{\binom{q}{r}-1}}\right)^{\binom{n}{r}/ \binom{q}{r}}.
 \]
 \item Let $r' < r < q$. The number of $(n,q,r')$-Steiner systems in a given $(n,q,r)$-Steiner system  is at most 
 \[
 \left((1+o(1))\frac{\binom{n}{r-r'}}{e^{\binom{q}{r'}-1}}\right)^{\binom{n}{r'}/ \binom{q}{r'}}.
 \]
 \item The number of decompositions of the complete $q$-uniform hypergraph on $n$ vertices into $(n,q,r)$-Steiner systems is at most
 \[
\left((1+o(1))\frac{\binom{n-r}{q-r}}{e^{\binom{q}{r}}}\right)^{\binom{n}{q}}.
\]
 \item The number of ways to decompose the basic $n$-queens solution $B$ from the proof of Theorem \ref{thm:lower_bound} into $\frac{n}{4}$ disjoint flips is at most
 \[
 \left((1+o(1))\frac{n}{e^{3}}\right)^{n/4}.
 \]
\end{enumerate}
\end{corollary}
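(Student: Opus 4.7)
The strategy is to recast decompositions of $B$ into flips as perfect matchings in a suitable hypergraph and invoke Theorem \ref{thm:hypergraphs}. I take the vertex set $V(H) = B$ (the $n$ queens of the basic configuration) and let the edge set $E(H)$ consist of all $4$-element subsets $\{q_1,q_2,q_3,q_4\}\subset B$ that arise as a flip, i.e., that split into two partner pairs in the sense of \lemref{lem:pair}. A perfect matching in $H$ is then precisely a partition of $B$ into $n/4$ pairwise disjoint flips, so the number of decompositions equals $M(H)$, and $H$ is $4$-uniform.

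To apply \thmref{thm:hypergraphs} I must verify that $H$ is $k$-regular for some $k\to\infty$ and that all codegrees are $o(k)$. The total number of flips equals $n(n-1)/4$, as computed in \secref{sec:lower_bound} via the bijection between unoccupied squares and flips; double counting then gives average degree $n-1$. Moreover, the translations $(x,y)\mapsto(x+2^kc,y+c)$ preserve $B$ and also preserve the equations of \lemref{lem:pair}, so they act as automorphisms of $H$ transitively on $V(H)$. Hence every queen has degree exactly $k := n-1$, which tends to infinity with $n$.

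For codegrees, fix distinct queens $q_1,q_2\in B$. Any flip containing both either has $\{q_1,q_2\}$ as one of its two partner pairs --- contributing exactly one flip by the uniqueness clause of \lemref{lem:pair} --- or has $q_1$ and $q_2$ sitting in different partner pairs $\{q_1,q_a\}$ and $\{q_2,q_b\}$. In the latter case, the explicit formulas from the proof of \lemref{lem:pair} force $\{y_a, y_b\}$ of the partner pair of $\{q_1, q_a\}$ to satisfy $(2^k+1)y = 2^k y_1 + y_a$ or $(2^k+1)y = 2^k y_a + y_1$; requiring $y_2$ to be one of these values yields a linear equation in $y_a$ with at most one solution in each case. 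This gives codegree at most $3$, hence $O(1) = o(k)$.

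With these parameters, \thmref{thm:hypergraphs} yields
\[
M(H) \leq \left((1+o(1))\cdot\frac{n-1}{e^{4-1}}\right)^{n/4} = \left((1+o(1))\cdot\frac{n}{e^{3}}\right)^{n/4},
\]
as claimed. The only step requiring care is the codegree bound, but since we just need $o(k)$, even the crude $O(1)$ estimate above is wasteful; the regularity follows cleanly from the transitive translation symmetry of $B$.
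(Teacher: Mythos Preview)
Your proof is correct and follows exactly the construction the paper uses for item~\ref{item:queens}: vertices are the queens of $B$, hyperedges are the flips, and one applies \thmref{thm:hypergraphs}. You simply supply the details the paper omits --- the vertex-transitivity argument via the translations $(x,y)\mapsto(x+2^kc,\,y+c)$ to get exact $(n-1)$-regularity, and the $O(1)$ codegree bound --- so there is nothing to correct and no substantive difference in approach.
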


Note that the first item of the Corollary is the statement of Theorem \ref{thm:upper_bound_torus}.

\begin{proof}[Proof of Corollary \ref{cor:examples}]
The proof in each case consists of describing an equivalent formulation of the problem as counting perfect matchings in some regular hypergraph with small codegrees, and then plugging the appropriate values for $n$,$k$ and $d$ into Theorem \ref{thm:hypergraphs}.
\begin{enumerate}
\item Each row, column, forward diagonal and backward diagonal will correspond to a vertex, and there is a $4$-uniform edge for each square in the board. Each such edge contains the row, column and diagonals of the corresponding square. It is clear that a perfect matching in this hypergraph corresponds to a toroidal $n$-queens solution. Note that the number of vertices is $4n$, the degree of every vertex is $n$ and the uniformity of the hypergraph is $4$. Moreover, the codegree of every two vertices in the graph is $O(1)$, and so plugging these values into the Theorem gives the result.

 \item This result was previously published in \cite{Tar15, GL16}. An order-$n$ Latin square is equivalent to a $3$-partite, $3$-uniform hypergraph with $n$ vertices in each part such that any two vertices from different parts belong to a unique edge. Now, a transversal in the Latin square corresponds to a perfect matching in this hypergraph. Since it has has $3n$ vertices, uniformity $3$, the degrees are all $n$ and the codegrees are $1$, the Theorem now gives the result.
 
 \item  An order-$n$ Sudoku square is equivalent to a $4$-partite, $4$-uniform hypergraph with $n^2$ vertices in each part. The parts correspond to the pairs (row, column), (column,symbol), (row, symbol) and (box,symbol). We add an edge for each possible symbol and for each location in the $n\times n$ matrix, so that there are $n^3$ edges. Note that an order-$n$ Sudoku square is equivalent to a perfect matching in this hypergraph. Therefore, as it has $4n^2$ vertices, uniformity $4$, the degrees are all $n$ and the codegrees are at most $\sqrt{n}$, we are done. 
 
 \item \label{item:designs} This was the upper bound in \cite{Ke15} for the special case of Steiner systems. An $(n,q,r)$-Steiner system is equivalent to a perfect matching in the so-called auxiliary hypergraph, in which each $r$-set corresponds to a vertex, and each $q$-set $f$ corresponds to a hyperedge containing all of the $r$-sets contained by $f$. Applying the Theorem to this hypergraph gives the result.
 
 \item As far as the authors are aware, this is a new result. It follows from taking a subhypergraph of the hypergraph described in the previous item, in which we have only those hyperedges corresponding to $q$-sets in the given $(n,q,r)$-Steiner system.

\item Here, the reformulation of the problem is a little less straightforward. Note that the number of $(n,q,r)$-Steiner systems that are needed to cover all of the $q$-sets of the complete $q$-uniform hypergraph on $n$ vertices is $\binom{n-r}{q-r}$. Thus, a decomposition is equivalent to a $(2q-r)$-uniform hypergraph $H$ over $(2n-r)$ vertices such that
\begin{itemize}
\item There are $n$ ``base" vertices, corresponding to the original $n$ vertices, and $(n-r)$ ``color" vertices.
\item All hyperedges consist of $q$ base vertices and $(q-r)$ color vertices.
\item Every $q$-set of base vertices belongs to a unique hyperedge. We call such $q$-sets ``base tuples".
\item Every $q$-set consisting of $r$ base vertices and $(q-r)$ color vertex belongs to a unique hyperedge. We call such $q$-sets ``color tuples".
\end{itemize}
By indexing the different parts of a given decomposition by $(q-r)$-element subsets of $[n-r]$, we can define the correspondence between decompositions and hypergraphs by adding the hyperedge $B\cup C$ to the hypergraph iff the $q$-set $A$ belongs to part $B$ in the decomposition.

We now wish to define a hypergraph $H'$ such that a perfect matching in $H'$ corresponds to a decomposition hypergraph as above. We take $H'$ to be the $\left(\binom{q}{r}+1\right)$-uniform hypergraph over $\left(\binom{n}{q} + \binom{n}{r} \cdot \binom{n-q}{q-r} \right)$ vertices in which
\begin{itemize}
\item The vertices correspond to the $q$-sets of $H$ that must be covered. That is, they are the color and base tuples.
\item For each base tuple $f$ and $(q-r)$-set of color vertices $g$, we add a hyperedge to $H'$ consisting of $f$ together with all of the color tuples of the form $g\cup h$ for some $r$-set $h\subset f$.
\end{itemize}
The hypergraph $H'$ has the same role as the auxiliary hypergraph in item \ref{item:designs}. As above, a perfect matching in $H'$ corresponds to a legal decomposition hypergraph $H'$.
Since $H'$ is $\binom{n-r}{q-r}$-regular and all codegrees are sufficiently small, the result follows. 

 \item \label{item:queens} We define our hypergraph by taking the vertices to be the queens, while the hyperedges correspond to the flips. 
\end{enumerate}
\end{proof}

We include item \ref{item:queens} to illustrate the versatility of Theorem \ref{thm:hypergraphs}, and also to give an upper bound on the number of $n$-queens solutions that could conceivably be constructed using the methods of Theorem \ref{thm:lower_bound}. As can be seen, there is a substantial gap between this bound and our upper bound on the number of $n$-queens solutions.

We turn to prove the Theorem.
\begin{proof}[Proof of Theorem \ref{thm:hypergraphs}]
The proof follows the same lines as the proof of Theorem 1 in \cite{LL13}. We estimate the entropy of a perfect matching $X$ chosen uniformly at random. For $v \in V$, let $X_v$ be the edge covering the vertex $v$ in $X$. Now, by the chain rule, for any ordering of the vertices there holds:
\[
H(X) = \sum_{v \in V} H(X_v| X_{v'}, v' \text{ precedes } v).
\]
In particular, we can choose a random ordering and take the expectation of the bound with respect to the ordering. It will be convenient to choose the ordering in a slightly roundabout way.
We choose a random vector of real numbers $\alpha = (\alpha_v)_{v \in V}$, where for each vertex $v$, we have $\alpha_v \sim U[0,1]$ independently from the other vertices. The variables $X_v$ are exposed in order of descending $\alpha_v$.
We have
\[
H(X) = \mathbb{E}_{\alpha} \left[\sum_{v \in V} H(X_v| X_{v'}, v' \text{ precedes } v)\right] 
\]
\[
\leq  \mathbb{E}_{\alpha} \left[\sum_{v \in V} \mathbb{E}_{(X_{v'}, v' \text{ precedes } v)}[H(X_v|X_{v'}=x_{v'}, v' \text{ precedes } v)]\right].
\]
Now, fix an ordering and a given vertex $v$. If some other vertex in $X_v$ precedes $v$, then by the time we get to the vertex $v$, we already know the value of $X_v$. Even if $v$ precedes the other vertices in $X_v$, given the values of previous $X_{v'}$, some of the possible values for $X_v$ are ruled out. In particular, if $f$ is an edge containing $v$ and $v' \in f$, then if any of the vertices in $X_{v'}$ precede $v$, $X_v$ cannot be equal to $f$. We denote by $N_v$ the number of possible values for $v$ given previous choices. 

Using the fact that $H(Y) \leq \log(|\text{Range}(Y)|)$ for any discrete random variable $Y$, we have
\[
H(X) \leq  \mathbb{E}_{X} \left[\sum_{v \in V} \mathbb{E}_{\alpha}[\log(N_v)]\right].
\]
Let $A_v$ denote the event that $v$ is the first vertex in $X_v$ to be exposed. Our next step is to split the expectation over the ordering into two expectations: One over $\alpha_v$, and one over the remaining variables in $\alpha$. Note that given $\alpha_v$, the probability that $v$ precedes any set of $j$ vertices is $\alpha_v^j$. This does not change if we also condition over $A_v$.
\[
H(X) \leq  \mathbb{E}_{X} \left[\sum_{v \in V} \mathbb{E}_{\alpha_v}\mathbb{E}_{\alpha | \alpha_v}[\Pr(A_v) \cdot \log(N_v)]\right]
\]
\[
= \mathbb{E}_{X} \left[\sum_{v \in V} \mathbb{E}_{\alpha_v}[ \alpha_v^{d-1} \cdot \mathbb{E}_{\alpha | \alpha_v}[ \log(N_v)]]\right]
\]
\[
\leq \mathbb{E}_{X} \left[\sum_{v \in V} \mathbb{E}_{\alpha_v}[\alpha_v^{d-1} \cdot \log \left(\mathbb{E}_{\alpha | \alpha_v}[N_v]\right)]\right]
\]
\[
= \mathbb{E}_X \left[ \sum_{v \in V} \int_0^1 \alpha_v^{d-1} \log \left( \mathbb{E}_{\alpha | \alpha_v}[N_v] \right)d \alpha_v \right].
\]

We turn to estimate $\mathbb{E}_{\alpha | \alpha_v}[N_v]$. We say that an edge $f\in E$ is bad if $|f \cap X_v| > 1$ for some $v \in V$. Note that the number of bad edges is at most $\left(\frac{n}{d}\right) \cdot \binom{d}{2} \cdot o(k)$. Denote the set of bad edges by $E^B$, and the set of bad edges containing a vertex $v$ by $E^B_v$.

Now, $\mathbb{E}[N_v] = \sum_{f:v \in f}{\Pr(f \text{ is available})}$. There are two cases: If $f \in \{X_v\} \cup E^B$, we upper bound the probability that $f$ is available by $1$. If $f$ is not $X_v$ or a bad edge, then there are exactly $d(d-1)$ vertices whose exposure would eliminate $f$. These are the vertices of the edges $X_{v'}$, for the vertices $v'\neq v$ of $f$. Note that the edges $X_{v'}$ are disjoint from each other and from $X_v$, because $X$ is a perfect matching. Therefore, the probability that a good edge is available is $\alpha_v^{d(d-1)}$, and we have 
\[
\mathbb{E}[N_v] \leq 1+|E^B_v|+ k \cdot \alpha_v^{d(d-1)}.
\]
Therefore,
\[
H(X) \leq \mathbb{E}_X \left[ \sum_{v \in V} \int_0^1 \alpha_v^{d-1} \log \left( 1+|E^B_v|+ k \cdot \alpha_v^{d(d-1)} \right) d \alpha_v \right]
\]
\[
= \mathbb{E}_X \left[  \int_0^1 \left( \sum_{v \in V} x^{d-1} \log \left( 1+|E^B_v|+ k \cdot x^{d(d-1)} \right)\right) dx \right]
\]
\[
\leq \mathbb{E}_X \left[  \int_0^1  n \cdot x^{d-1} \log \left( 1+ k \cdot x^{d(d-1)}+\left(\frac{1}{n}\right)\cdot\sum_{v \in V}|E^B_v| \right) dx \right]
\]
\[
\leq \mathbb{E}_X \left[  \int_0^1  n \cdot x^{d-1} \log \left( k \cdot x^{d(d-1)}+o(k)\right) dx \right]
\]
\[
= n \cdot \int_0^1 x^{d-1} \log \left( k \cdot x^{d(d-1)}+o(k) \right) dx 
\]
\[
= \frac{n}{d}\cdot\left(\log(k)-(d-1)+o(1)\right).
\]

As $H(X)=\log(M(H))$, this completes the proof.

\end{proof}

\section{An entropy upper bound for the classical problem}
\label{sec:upper_bound_classical}

The $n$-queens problem on the non-toroidal chessboard presents a greater challenge for the entropy method. The issue is that in this case, a square on the board may be attacked by two, three, or four queens, whereas in the toroidal case each square is attacked by exactly four queens. We are able to overcome this obstacle, but we no longer believe that the result is tight.

Proceeding along similar lines to the proof of Theorem \ref{thm:hypergraphs}, we estimate the entropy of a random $n$-queens solution $X = \{(i,X_i): 1 \leq i \leq n\}$. We order the variables $X_i$ by descending order of independent random real numbers $\alpha_i \sim U([0,1])$. Let $N_i$ denote the number of positions in the $i$-th row that are not ruled out by previously seen rows. Here a position is ruled out if it shares a column or a diagonal with a previously exposed queen. As above, one can show that
\[
H(X) \leq \mathbb{E}_X \sum_i \mathbb{E}_{\alpha_i} \left[ \log( \mathbb{E}_{\alpha|\alpha_i}[N_i] )\right].
\]
For each row, there is a unique position that is cannot be ruled out, namely $X_i$. Every other position shares a column and up to two diagonals with a queen, and so it can be ruled out by one, two or three different rows. Let $a_i, b_i$ and $c_i$ be the number of positions in the $i$-th row that can be ruled out by three, two and one different rows respectively. Now, a position in the $i$-th row is \textit{not} ruled out if $\alpha_i > \alpha_j$ for every row $j$ that could rule out that position. Therefore, $  \mathbb{E}_{\alpha|\alpha_i}[N_i] =1 + a_i \alpha_i^3 + b_i \alpha_i^2 + c_i \alpha_i$, and we have
\[
H(X) \leq \mathbb{E}_X \sum_i \int_0^1\log( 1 + a_i x^3 + b_i x^2 + c_i x )dx .
\]

It is natural to guess that the $1$ inside the log does not matter too much. That is the content of the following lemma. 
\begin{lemma}
For any $a,b,c \geq 0$ such that $a+b+c = (n-1)$, there holds
\[\int_0^1\log(1 + a x^3 + b x^2 + c x) dx = \int_0^1 \log(a x^3 + b x^2 + c x) dx + O(n^{-\frac12}) .\]
\end{lemma}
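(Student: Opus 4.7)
Write $P(x) := ax^3 + bx^2 + cx$, so the claim reduces to showing
\[
D \;:=\; \int_0^1 \log\!\left(1 + \tfrac{1}{P(x)}\right) dx \;=\; O(n^{-1/2}).
\]
Since $a,b,c \geq 0$ and $a+b+c = n-1$, the polynomial $P$ is monotone increasing on $[0,1]$ with $P(1) = n-1$, and admits the uniform lower bound $P(x) \geq (n-1)x^3$ (the worst case, obtained by concentrating all mass on the cubic coefficient), together with the sharper individual bounds $P(x) \geq ax^3,\ bx^2,\ cx$.

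My plan is the standard split at a threshold $\delta = \delta(n)$. On the tail $x \in [\delta, 1]$ the quantity $1/P$ is small, so $\log(1 + 1/P) \leq 1/P$ and the uniform lower bound yields
\[
\int_\delta^1 \log\!\left(1 + \tfrac{1}{P(x)}\right) dx \;\leq\; \int_\delta^1 \tfrac{dx}{(n-1)x^3} \;\lesssim\; \tfrac{1}{n \delta^2}.
\]
On the head $x \in [0, \delta]$, $P(x)$ may be below $1$, so I would use $\log(1+y) \leq \log 2 + \max(0, \log y)$ to reduce matters to integrating $-\log P$, which via $P(x) \geq (n-1)x^3$ and the elementary antiderivative of $\log x$ contributes $O(\delta \log n)$. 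Balancing the two pieces with $\delta \asymp n^{-1/3}$ gives a bound of order $n^{-1/3} \log n$.

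The main obstacle is sharpening this to the stated $n^{-1/2}$. The natural refinement is to split into cases by which of $a, b, c$ is dominant and apply the strongest lower bound in each regime: when $b$ or $c$ is $\Omega(n)$, the sharper bounds $P \geq bx^2$ or $P \geq cx$ make the head contribution $O(n^{-1/2})$ comfortably. The hard case is $a = \Omega(n)$ with $b, c$ small, where the cubic vanishing of $P$ at $0$ is genuine: a direct substitution $u = (n-1)^{1/3} x$ in the extremal case $a = n-1$, $b = c = 0$ already shows $D \asymp n^{-1/3}$. Reconciling this with the claimed $O(n^{-1/2})$ would seem to require an extra input beyond the sum constraint $a+b+c = n-1$, and this is the step I expect to be the true difficulty in the proof.
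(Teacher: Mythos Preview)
Your instinct is correct, and in fact you have found a genuine error in the \emph{statement} of the lemma, not merely a difficulty in proving it. Your extremal example $a=n-1$, $b=c=0$ gives $P(x)=(n-1)x^3$, and the substitution $u=(n-1)^{1/3}x$ yields
\[
\int_0^1 \log\!\left(1+\frac{1}{(n-1)x^3}\right)dx
= (n-1)^{-1/3}\int_0^{(n-1)^{1/3}}\log\!\left(1+\frac{1}{u^3}\right)du
\sim C\,(n-1)^{-1/3},
\]
where $C=\int_0^\infty \log(1+u^{-3})\,du$ is a finite positive constant. So the difference is genuinely $\Theta(n^{-1/3})$ in this case, and no $O(n^{-1/2})$ bound can hold under the sole hypothesis $a+b+c=n-1$.

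The paper's own proof of the lemma is also flawed: it asserts the pointwise inequality
\[
\log\!\left(1+\frac{1}{ax^3+bx^2+cx}\right)\ \le\ \log\!\left(1+\frac{1}{(n-1)x}\right),
\]
which would require $ax^3+bx^2+cx\ge (n-1)x$. But for $x\in(0,1)$ one has $x^3\le x^2\le x$, so in fact $ax^3+bx^2+cx\le (a+b+c)x=(n-1)x$, and the inequality goes the wrong way unless $a=b=0$. The subsequent calculation therefore does not establish the claim.

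Fortunately none of this affects the paper's main result. Your argument already gives the correct bound $D=O(n^{-1/3}\log n)$ valid for all admissible $(a,b,c)$; summing over the $n$ rows produces an error of size $O(n^{2/3}\log n)=o(n)$, which is absorbed harmlessly into the lower-order terms in the estimate for $H(X)$. So the right fix is simply to replace $O(n^{-1/2})$ by $O(n^{-1/3}\log n)$ (or $O(n^{-1/3})$ with a slightly more careful head estimate) and carry the weaker error through; there is nothing more to reconcile.
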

\begin{proof}
For all $0 < x < 1$ there holds
\[ \log(1 + a x^3 + b x^2 + c x) =   \log(a x^3 + b x^2 + c x) + \log(1+1/(a x^3 + b x^2 + c x)).\]
We use the fact that $a + b + c = n-1$.
\[ \log(1+1/(a x^3 + b x^2 + c x)) \leq \log \left(1+\frac{1}{(n-1)x}\right) =\]
\[   \int_0^{n^{-\frac12}} \log \left(1+\frac{1}{(n-1)x}\right) + \int_{n^{-\frac12}}^1 \log \left(1+\frac{1}{(n-1)x}\right) \leq\]
\[  \int_0^{n^{-\frac12}} \log\left(\frac{2}{x \sqrt{n}}\right) dx + \frac{n^{\frac12}}{n-1} .\]
Now,
\[  \int_0^{n^{-\frac12}} \log \left(\frac{2}{x \sqrt{n}}\right) dx \leq - \int_0^{n^{-\frac12}} \log(x) dx = \]
\[
\int_0^{n^{-\frac12}} \log(x) dx = [ x \log(x) - x ] \vert_0^{n^{-\frac12}} \leq  n^{-\frac12}.
\]
This proves the lemma.
\end{proof}

Thus, we have \[ H(X) \leq O(n^{\frac12}) + \mathbb{E}_X \int_0^1 \sum_i \log(a_i x^3 + b_i x^2 + c_i x)dx = \]
\[ O(n^{\frac12}) + n \int_0^1{\log(x)dx} + \mathbb{E}_X \int_0^1 \sum_i \log(a_i x^2 + b_i x + c_i)dx = \]
\[ O(n^{\frac12}) - n + \mathbb{E}_X \int_0^1 \sum_i \log(a_i x^2 + b_i x + c_i)dx \]

To get a meaningful upper bound, we must show that $a_i$ and $b_i$ cannot all be very small. That is the content of the following lemma.

\begin{lemma}
\label{lem:concentric}
\[ \sum_i {2 a_i + b_i} \geq \left( \frac{5}{4} \right)n^2-6n .\]
\end{lemma}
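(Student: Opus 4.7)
The plan is to reinterpret $2a_i+b_i$ combinatorially and reduce the lemma to a Chebyshev-type bound on permutations.

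In row $i$, every non-queen position $(i,j)$ is attacked by exactly one queen via its column (as $X$ is a permutation) and by at most one queen along each of its two diagonals (as any diagonal holds at most one queen in a valid configuration). Hence $2a_i+b_i$ counts the diagonal attacks on non-queen positions of row $i$. Swapping the order of summation, each queen $(i',X_{i'})$ contributes one diagonal attack per row $i\neq i'$ on which its forward or backward diagonal lands, so
\[
\sum_i (2a_i + b_i) = \sum_i \bigl( L_f(i,X_i) + L_b(i,X_i) \bigr) - 2n,
\]
where $L_f(r,c):=n-|r+c-n-1|$ and $L_b(r,c):=n-|r-c|$ are the lengths on the board of the two diagonals through $(r,c)$.

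The elementary identity $|u+v|+|u-v|=2\max(|u|,|v|)$, applied with $u=2r-n-1$ and $v=2c-n-1$, simplifies $L_f(r,c)+L_b(r,c)$ to $2n - \max(|2r-n-1|,|2c-n-1|)$. The lemma therefore reduces to
\[
\sum_i \max\bigl(|2i-n-1|,\,|2X_i-n-1|\bigr) \leq \tfrac{3n^2}{4} + 4n.
\]

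This inequality holds for any permutation $X$ by a union bound over concentric shells around the board center. For each integer $k\geq 1$ let $N_k$ denote the number of $i$ with $\max(|2i-n-1|,|2X_i-n-1|)\geq k$, so that $\sum_i\max = \sum_{k\geq 1} N_k$. Because $X$ is a permutation, $N_k\leq 2A_k$ with $A_k := |\{i\in[n]:|2i-n-1|\geq k\}|\leq n-k+1$, and trivially $N_k\leq n$. Summing $\min(n,2n-2k+2)$ over $k=1,\ldots,n-1$ splits at $k\approx(n+2)/2$ into a constant-$n$ part and a linearly decreasing tail, yielding $\sum_k N_k = \tfrac{3n^2}{4}+O(n)$, comfortably within the slack allowed. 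The main obstacle is just the elementary bookkeeping to verify that the constants in both parities of $n$ give the advertised $-6n$ rather than merely $-O(n)$, which is routine.
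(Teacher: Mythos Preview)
Your proof is correct and is essentially the paper's argument in different clothing: your quantity $\max(|2i-n-1|,|2X_i-n-1|)$ is exactly $n-1-2\min\{i,X_i,n+1-i,n+1-X_i\}$, so its level sets are the paper's ``concentric square rings,'' and your union bound $N_k\le 2A_k$ together with $N_k\le n$ is the statement that each ring lies in two rows and two columns and hence contains at most four queens. The only difference is cosmetic---you sum via the layer-cake formula $\sum_k N_k$ while the paper fills the outer rings greedily with four queens each---and the omitted bookkeeping at the end does indeed give the claimed $-6n$.
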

\begin{proof}
Let $D(i,j)$ be the number of positions that share a diagonal with the square $(i,j)$, other than the square $(i,j)$ itself. Note that the sum $\sum_i {2 a_i + b_i}$ counts pairs of the form 
$((i,j) \in X, (i',j') \notin X)$ such that $(i,j)$ shares a diagonal with $(i',j')$. We have 
\[
 \sum_i {2 a_i + b_i} = \sum_{(i,j) \in X} D(i,j).
\] 
The lemma now follows from the observation that $D(i,j)$ is constant along concentric square rings. Concretely, there holds $D(i,j)=(n-3)+ 2 * \min\{ i,j,n+1-i,n+1-j \}$. For example, for $n=5$ the values of $D(i,j)$ are described by the matrix
\[
D = \left( \begin{matrix}
4 & 4 & 4 & 4 & 4 \\
4 & 6 & 6 & 6 & 4 \\
4 & 6 & 8 & 6 & 4 \\
4 & 6 & 6 & 6 & 4 \\
4 & 4 & 4 & 4 & 4
\end{matrix} \right).
\]
Now, as every square ring can contain at most $4$ queens in $X$, we have
\[
 \sum_{(i,j) \in X} D(i,j) \geq \sum_{k=0}^{\lfloor \frac{n}{4}\rfloor-1} {4 \cdot (n - 1 + 2k)} \geq
\]
\[
(n-4)(n-1) + 4 \cdot \left( \frac{n}{4} -1\right)\left(\frac{n}{4} \right) \geq \left( \frac{5}{4} \right) \cdot n^2 - 6n.
\]
\end{proof}

The following lemma uses lemma \ref{lem:concentric} to bound the sum $\sum_i{\log(a_i x^2 + b_i x + c_i)}$.

\begin{lemma}
For any $0 < x < 1$ there holds
\[
\sum_i{\log(a_i x^2 + b_i x + c_i)} \leq O(1) + n \log\left( \left(\frac{5}{8}\right) n x^2 + \left(\frac{3}{8}\right) n \right).
\]
\end{lemma}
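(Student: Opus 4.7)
The plan is to apply Jensen's inequality to the concave function $\log$, and then solve a small linear-programming problem using \lemref{lem:concentric} to control the resulting average.

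First, by Jensen's inequality applied to the $n$ values $a_i x^2 + b_i x + c_i$,
\[
\sum_i \log(a_i x^2 + b_i x + c_i) \leq n \log\left(\frac{1}{n} \sum_i (a_i x^2 + b_i x + c_i)\right) = n \log\left(\frac{A x^2 + B x + C}{n}\right),
\]
where $A := \sum_i a_i$, $B := \sum_i b_i$, $C := \sum_i c_i$. Using that each row contains exactly $n-1$ squares besides the chosen queen, so $a_i + b_i + c_i = n-1$, we get $A + B + C = n(n-1) = n^2 - n$, and so
\[
A x^2 + B x + C = (x^2 - 1)\,A + (x - 1)\,B + n^2 - n.
\]

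Next, since the coefficients $(x^2-1)$ and $(x-1)$ are both negative on $(0,1)$, to upper bound the right-hand side we want $A$ and $B$ as small as possible subject to $A, B \geq 0$ and, by \lemref{lem:concentric}, $2A + B \geq \frac{5}{4}n^2 - 6n =: K$. This is a two-variable linear program; its minimum over $(1-x^2)A + (1-x)B$ is attained at a vertex of the feasible region, and one checks by comparing the vertices $(A,B) = (K/2, 0)$ and $(A,B) = (0, K)$ that $(K/2, 0)$ is optimal since $(1-x^2)/2 < 1-x$ for $x \in (0,1)$. Substituting, we obtain
\[
A x^2 + B x + C \leq \frac{K}{2}(x^2 - 1) + n^2 - n = \frac{5}{8} n^2 x^2 + \frac{3}{8} n^2 - 3 n x^2 + 2n \leq \frac{5}{8} n^2 x^2 + \frac{3}{8} n^2 + 2n.
\]

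Finally, dividing by $n$ and plugging into the Jensen bound,
\[
\sum_i \log(a_i x^2 + b_i x + c_i) \leq n \log\left(\frac{5}{8} n x^2 + \frac{3}{8} n + 2\right) = n \log\left(\frac{5}{8} n x^2 + \frac{3}{8} n\right) + n \log\left(1 + \frac{2}{\frac{5}{8} n x^2 + \frac{3}{8} n}\right).
\]
The last term is at most $n \cdot \frac{2}{(3/8)n} = 16/3 = O(1)$, which completes the proof. The only real step beyond routine manipulation is the optimization in the second paragraph, and the main thing to verify carefully is that the argument of each $\log$ stays positive (which holds because $a_i + b_i + c_i = n-1 > 0$ and $x > 0$, so at least one term in $a_i x^2 + b_i x + c_i$ is strictly positive whenever the sum is nonzero).
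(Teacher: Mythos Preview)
Your proof is correct and follows essentially the same strategy as the paper: apply Jensen's inequality (concavity of $\log$) together with \lemref{lem:concentric}. The only difference is the order of operations. The paper first replaces each term $a_i x^2 + b_i x + c_i$ by the larger quantity $a'_i x^2 + c'_i$ with $a'_i = a_i + b_i/2$, $c'_i = c_i + b_i/2$ (using $(x-1)^2 \geq 0$), which collapses the problem to a single parameter $a'_i$, and only then applies Jensen in $a'_i$; you instead apply Jensen first to get the aggregate $Ax^2+Bx+C$, and then run a two-variable linear program to find the worst case. Your LP optimum $(A,B)=(K/2,0)$ is exactly what the paper's substitution $b_i \mapsto 0$, $a_i \mapsto a_i + b_i/2$ produces, so the two arguments are equivalent and yield the identical bound $\tfrac{5}{8}n x^2 + \tfrac{3}{8}n + 2$ before the final $O(1)$ absorption.
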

\begin{proof}
Let $a'_i = a_i + \frac{b_i}{2}$ and $c'_i = c_i + \frac{b_i}{2}$. Note that 
\[ (a'_i x^2 + c'_i) - (a_i x^2 + b_i x + c_i)  =  \frac{b_i}{2} (x-1)^2 \geq 0, \]
and therefore 
\[
(a'_i x^2 + c'_i) \geq (a_i x^2 + b_i x + c_i)
\]
As $c'_i=(n-1)-a'_i$, we have 
\[
\sum_i{\log(a_i x^2 + b_i x + c_i)} \leq \sum_i{\log((x^2-1) a'_i + (n-1))}. 
\]
Note that the function $f(t) = \log((x^2-1) t + (n-1))$ is concave. Therefore, 
\[
\sum_i{\log((x^2-1) a'_i + (n-1))} \leq n \log( (x^2-1)\left(\frac{\sum_i a'_i}{n} \right) + (n-1)).
\]
Lemma \ref{lem:concentric} implies that $\sum_i{a'_i} \geq \frac{5}{8} n^2 - 3n$, so
\[
n \log( (x^2-1)\left(\frac{\sum_i a'_i}{n} \right) + (n-1)) \leq n \log\left( \left(\frac{5}{8}\right) n x^2 + \left(\frac{3}{8}\right) n + 2 \right) =
\] 
\[
O(1) + n \log\left( \left(\frac{5}{8}\right) n x^2 + \left(\frac{3}{8}\right) n \right).
\]

\end{proof}

We have shown that 
\[
H(X) \leq O(n^{\frac12}) - n + n \cdot \int_0^1 \log\left( \left(\frac{5}{8}\right) n x^2 + \left(\frac{3}{8}\right) n \right) dx =
\]
\[
O(n^{\frac12}) + n \log(n) - n + n \cdot \int_0^1 \log\left( \left(\frac{5}{8}\right) x^2 + \left(\frac{3}{8}\right) \right) dx =
\]
\[
O(n^{\frac12}) + n \log(n) - n + n \cdot (-2 + 2 \sqrt{3/5} \cdot \text{arctan}(\sqrt{5/3})).
\]
Set $\alpha = -3 + 2 \sqrt{3/5} \cdot \text{arctan}(\sqrt{5/3}) \approx -1.587$. We have 
\[
H(X) \leq O(n^{\frac12}) + n (\log(n)-\alpha),
\]
and therefore the number of $n$-queens configurations is at most $ \left((1+o(1)) \frac{n}{e^\alpha}\right)^n$, as desired.


\begin{thebibliography}{99}

\bibitem{BS09}
J. Bell and B. Stevens, {\em A survey of known results and research areas for n-queens}, {\sc Discrete Mathematics} 309.1 (2009): 1-31.

\bibitem{CT12}
T. M. Cover and J. A. Thomas, {\em Elements of information theory}, {\sc John Wiley \& Sons}, 2012.

\bibitem{GL16}
R. Glebov and Z. Luria, {\em On the maximum number of Latin transversals}, {\sc Journal of Combinatorial Theory, Series A} 141 (2016): 136-146.

\bibitem{Ke15}
P. Keevash, {\em Counting designs}, arXiv preprint arXiv:1504.02909 (2015).

\bibitem{LL13}
N. Linial and Z. Luria, {\em An upper bound on the number of Steiner triple systems}, {\sc Random Structures \& Algorithms} 43.4 (2013): 399-406.

\bibitem{P1874_1}
E. Pauls, {\em Das Maximalproblem der Damen auf dem Schachbrete}, {\sc Deutsche Schachzeitung. Organ f\"{u}r das Gesammte Schachleben} 29 (5) (1874) 129-134.

\bibitem{P1874_2}
E. Pauls, {\em Das Maximalproblem der Damen auf dem Schachbrete, II}, {\sc Deutsche Schachzeitung. Organ f\"{u}r das Gesammte Schachleben} 29 (9) (1874) 257-267.

\bibitem{Pol18}
G. P\'{o}lya, {\em \"{U}ber die `doppelt-periodischen" Los\"{u}ngen des n-Damen-Problems}, {\sc W. Ahrens (Ed.), Mathematische Unterhaltungen und Spiele}, vol. 2, 
2nd ed., B.G. Teubner, 1918, pp. 364-374.

\bibitem{Pot16}
V. N. Potapov, {\em On the number of latin hypercubes, pairs of orthogonal latin squares and MDS codes}, arXiv preprint arXiv:1510.06212 (2015).

\bibitem{Rad97}
J. Radhakrishnan, {\em An entropy proof of Bregman's theorem}, {\sc Journal of combinatorial theory, Series A} 77.1 (1997): 161-164.

\bibitem{RVZ94}
I. Rivin, I. Vardi and P. Zimmerman, {\em The n-queens problem}, {\sc The American Mathematical Monthly} 101.7 (1994): 629-639.

\bibitem{Tar15}
A.A. Taranenko, {\em Multidimensional permanents and an upper bound on the number of transversals in Latin squares}, {\sc Journal of Combinatorial Designs} 23.7 (2015): 305-320.

\end{thebibliography}
\end{document}